\author{Jeroen Demeyer\thanks{Postdoctoral Fellow of the Research Foundation --- Flanders (FWO).
\textbf{Address:} Ghent University, Department of Mathematics, Kr{\ij}gs\-laan 281, 9000 Gent, Belgium.
} \and Antonella Perucca\thanks{Postdoctoral Fellow of the Research Foundation --- Flanders (FWO).
\textbf{Address:} K.U.Leuven, Department of Mathematics, Celest{\ij}nenlaan 200\,B, 3001 Leuven, Belgium.
}}
\title{The constant of the support problem for abelian varieties}
\providecommand{\theoremname}{Theorem}
\providecommand{\maintheoremname}{Main Theorem}
\providecommand{\conjname}{Conjecture}
\providecommand{\openname}{Open Problem}
\providecommand{\questionname}{Question}
\providecommand{\propname}{Proposition}
\providecommand{\obsname}{Observation}
\providecommand{\lemmaname}{Lemma}
\providecommand{\mainlemmaname}{Main Lemma}
\providecommand{\corname}{Corollary}
\providecommand{\claimname}{Claim}
\providecommand{\definename}{Definition}
\providecommand{\examplename}{Example}
\providecommand{\cexamplename}{Counterexample}
\providecommand{\exercisename}{Exercise}
\providecommand{\problemname}{Problem}
\providecommand{\factname}{Fact}
\providecommand{\factsname}{Facts}
\providecommand{\remarkname}{Remark}
\providecommand{\solutionname}{Solution}
\providecommand{\stepname}{Step}
\providecommand{\condname}{Condition}
\theoremstyle{plain}
\newtheorem{theorem}{\theoremname}[section]
\newtheorem*{theorem*}{\theoremname}
\newtheorem*{mtheorem*}{\maintheoremname}
\newtheorem*{conj*}{\conjname}
\newtheorem*{open*}{\openname}
\newtheorem{prop}[theorem]{\propname}
\newtheorem*{prop*}{\propname}
\newtheorem{obs}[theorem]{\obsname}
\newtheorem*{obs*}{\obsname}
\newtheorem{lemma}[theorem]{\lemmaname}
\newtheorem*{lemma*}{\lemmaname}
\newtheorem*{mlemma*}{\mainlemmaname}
\newtheorem{cor}[theorem]{\corname}
\newtheorem*{cor*}{\corname}
\theoremstyle{definition}
\newtheorem{define}[theorem]{\definename}
\newtheorem*{define*}{\definename}
\newtheorem*{example*}{\examplename}
\newtheorem*{cexample*}{\cexamplename}
\newtheorem{question}[theorem]{\questionname}
\newtheorem*{question*}{\questionname}
\newtheorem*{exercise*}{\exercisename}
\newtheorem*{problem*}{\problemname}
\newtheorem*{fact*}{\factname}
\newtheorem*{facts*}{\factsname}
\newtheorem*{cond*}{\condname}
\theoremstyle{remark}
\newcounter{step}
\newcounter{case}
\newcommand{\inv}{^{-1}}
\newcommand{\mi}{\mathrm{i}}
\newcommand{\frakm}{\mathfrak{m}}
\newcommand{\frakp}{\mathfrak{p}}
\newcommand{\C}{\mathbb{C}}
\newcommand{\N}{\mathbb{N}}
\newcommand{\Q}{\mathbb{Q}}
\newcommand{\R}{\mathbb{R}}
\newcommand{\Z}{\mathbb{Z}}
\newcommand{\calA}{\mathcal{A}}
\newcommand{\calB}{\mathcal{B}}
\newcommand{\calF}{\mathcal{F}}
\newcommand{\calM}{\mathcal{M}}
\newcommand{\calN}{\mathcal{N}}
\newcommand{\calO}{\mathcal{O}}
\newcommand{\AS}[2][void]{\ifthenelse{\equal{#1}{void}}{\mathbb{A}\!^{#2}}{\mathbb{A}\!^{#2}(#1)}}
\newcommand{\PS}[2][void]{\ifthenelse{\equal{#1}{void}}{\mathbb{P}^{#2}}{\mathbb{P}^{#2}(#1)}}
\newcommand{\FF}[1]{\mathbb{F}\!_{#1}}
\DeclareMathOperator{\Ann}{Ann}
\DeclareMathOperator{\End}{End}
\DeclareMathOperator{\Hom}{Hom}
\DeclareMathOperator{\gal}{Gal}
\DeclareMathOperator{\ord}{ord}
\newcommand{\set}[2]{\{{#1}\mid{#2}\}}
\newcommand{\form}[1]{\langle{#1}\rangle}
\newcommand{\vece}{\mathbf{e}}
\newcommand{\vecr}{\mathbf{r}}
\newcommand{\vecs}{\mathbf{s}}
\newcommand{\vecx}{\mathbf{x}}
\newcommand{\vecy}{\mathbf{y}}
\begin{document}

\maketitle

\begin{abstract}
Let $A$ be an abelian variety defined over a number field $K$
and let $P$ and $Q$ be points in $A(K)$ satisfying the following condition:
for all but finitely many primes $\mathfrak p$ of $K$,
the order of $(Q \bmod \mathfrak p)$ divides the order of $(P \bmod \mathfrak p)$.
Larsen proved that there exists a positive integer $c$ such that $c Q$ is in the $\End_K(A)$-module generated by $P$.
We study the minimal value of $c$ and construct some refined counterexamples.
\end{abstract}

\textbf{2010 MSC:}
14K15 (primary), 
11G10, 
16H10, 
16S50, 
14L10 (secondary). 
\\\textbf{Keywords:} abelian varieties, endomorphism, maximal order, support problem, tori.

\section{Introduction}

Let $A$ be an abelian variety defined over a number field $K$. Let $P$, $Q$ be points in $A(K)$ satisfying the following condition:
 for all but finitely many primes $\mathfrak p$ of $K$, the order of $(Q \bmod \mathfrak p)$ divides the order of $(P \bmod \mathfrak p)$.
The support problem asks whether there exists a $K$-endomorphism of $A$ mapping $P$ to $Q$.

If $A$ is $K$-simple and the points $P$ and $Q$ have infinite order,
Khare and Prasad proved in \cite[Theorem 1]{KharePrasad} that indeed $\phi(P)=Q$ for some $\phi$ in $\End_K (A)$.
This result does not hold for general abelian varieties. However, Larsen proved that there exist a $K$-endomorphism $\phi$ of $A$
and a positive integer $c$ such that $\phi(P)=cQ$ (\cite[Theorem 1]{larsen-supp-av}).
So in general one cannot take $c=1$ (not even if $\phi$ is taken in $\End_{\bar{K}} (A)$),
as shown by Larsen in \cite[Proposition 2]{larsen-supp-av}).

We study the minimal positive integer $c$ (depending only on $A$ and $K$) for which the following holds:
for every pair of points $P$, $Q$ in $A(K)$ satisfying the condition of the support problem,
there exists a $K$-endomorphism $\phi$ of $A$ such that $\phi(P)=cQ$.
It is known that such an integer exists
(\cite[Proposition 10]{Perucca2} or \cite[Proposition 4.3 and Theorem 5.2]{Larsenwhitehead}):
we call it the constant of the support problem.
The following question arises:

\begin{question}\label{cimin}
Does the constant of the support problem divide the exponent of the torsion part of $A(K)$?
\end{question}

The answer is affirmative for simple abelian varieties, as a consequence of \cite[Theorem 1]{KharePrasad}.
Larsen proved in \cite[Proposition 4.3 and Theorem 5.2]{Larsenwhitehead}
that the answer is affirmative whenever all the Tate modules of $A$ are integrally semi-simple (\cite[Definition 4.1]{Larsenwhitehead}).

In this paper, we use a new method to study the support problem:
we view the Mordell--Weil group as a module over the endomorphism ring
and apply the theory of maximal orders in division algebras.

We prove that the answer to Question~\ref{cimin} is affirmative
whenever $A$ is a power of a simple abelian variety $A_1$
such that $\End_K(A_1)$ is a maximal order in a division algebra.
More generally, the answer is affirmative
for products $\prod A_i^{e_i}$ of such powers, provided that $\Hom_K(A_i, A_j) = 0$ for $i \neq j$,
see Theorem~\ref{supp-tf-max}.
In particular, the answer is affirmative for at least one variety in every $K$-isogeny class
(this also follows from the results of Larsen in \cite{Larsenwhitehead}).

We also construct two counterexamples to Question~\ref{cimin} in section~\ref{examples}.
They are respectively of the following kind:
the power of a simple abelian variety whose endomorphism ring is not a maximal order;
an abelian variety which is $\bar{K}$-isomorphic (but not $K$-isomorphic)
to the power of an elliptic curve whose endomorphism ring is a maximal order.

With a similar construction, we answer in the negative to the question of the support problem for tori, see section \ref{exampleAtori}.


A motivation to study the support problem is given by the following theorem
which is a consequence of 
results on the support problem by Larsen (\cite{larsen-supp-av}), Khare and Prasad (\cite{KharePrasad})
and the second author (\cite{Perucca2}):

\begin{theorem}\label{classy}
Let $A$ be an abelian variety defined over a number field $K$.
Let $R$ be a point in $A(K)$ such that $\mathbb Z R$ is Zariski-dense in $A$.
Let $S$ be a set of primes of $K$ of Dirichlet density $1$.
\begin{enumerate}
\item The sequence
$$\{\ord(R \bmod \mathfrak p)\}_{\mathfrak p\in S}$$
determines the $K$-isomorphism class of $A$ and determines $R$ up to $K$-isomorphism.
\item Let $\ell$ be a prime number and write $\ord_\ell$ for the $\ell$-adic valuation of the order.
The sequence
$$\{\ord_\ell(R \bmod \mathfrak p)\}_{\mathfrak p\in S}$$
determines the $K$-isogeny class of $A$.
\end{enumerate}
\end{theorem}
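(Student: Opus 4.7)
The plan is to compare two candidate pairs $(A_1, R_1)$ and $(A_2, R_2)$, both satisfying the Zariski-density hypothesis and yielding the same sequence on $S$, and to build a $K$-isomorphism (part~1) or $K$-isogeny (part~2) between them. I form the product $B = A_1 \times_K A_2$ and consider the points $P = (R_1, 0)$ and $Q = (0, R_2)$ in $B(K)$. Since $\ord(P \bmod \mathfrak{p}) = \ord(R_1 \bmod \mathfrak{p})$ and similarly for $Q$, the equality of sequences on $S$ translates directly into the support-problem condition on $(P,Q)$ (in both directions for part~1, or only on the $\ell$-primary parts for part~2) over a density-$1$ set of primes.

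Next I invoke Larsen's theorem, in the density-$1$ refinement of \cite{Perucca2} (and its $\ell$-adic variant for part~2), to obtain an endomorphism $\phi \in \End_K(B)$ and a positive integer $c$ with $\phi(P) = cQ$. Projecting $\phi$ along the factors of $B$ yields a $K$-homomorphism $\gamma \colon A_1 \to A_2$ with $\gamma(R_1) = cR_2$, and symmetrically a $\gamma' \colon A_2 \to A_1$ with $\gamma'(R_2) = c'R_1$. Since $\gamma' \circ \gamma$ and the multiplication-by-$cc'$ map agree on the Zariski-dense subgroup $\mathbb{Z} R_1$ of $A_1$, they agree on all of $A_1$; likewise $\gamma \circ \gamma'$ agrees with multiplication by $cc'$ on $A_2$. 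This establishes the $K$-isogeny required by part~2.

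For part~1 I still need to upgrade the $K$-isogeny to a $K$-isomorphism taking $R_1$ to $R_2$. The strategy is to decompose $A_1$ and $A_2$ into $K$-isotypic components by Poincar\'e reducibility, to observe that Zariski-density of $\mathbb{Z} R_i$ forces the image of $R_i$ in each simple factor to have infinite order, and to apply the Khare--Prasad theorem \cite[Theorem~1]{KharePrasad}, which solves the support problem for simple abelian varieties with constant exactly $1$. Reassembling these simple-factor solutions provides a $K$-isomorphism $A_1 \to A_2$ sending $R_1$ to $R_2$, thereby determining the pair up to $K$-isomorphism.

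The main obstacle is precisely this last step: as the rest of the present paper shows, the constant of the support problem is in general strictly greater than $1$, so without the Zariski-density hypothesis one could only hope for isogeny, not isomorphism. What rescues part~1 is that Zariski-density channels the argument into the simple-factor regime, which is exactly where \cite{KharePrasad} supplies the missing ingredient $c=1$.
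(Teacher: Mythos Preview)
Your argument for part~2 is correct and coincides with the paper's: obtain homomorphisms $\gamma,\gamma'$ with $\gamma(R_1)=cR_2$, $\gamma'(R_2)=c'R_1$, then use Zariski-density to conclude $\gamma'\gamma=[cc']$, whence $\gamma$ is an isogeny. (The paper cites \cite[Corollary~8 and Proposition~9]{Perucca2}, restated as Theorem~\ref{classy-l}, to get $\gamma\in\Hom_K(A_1,A_2)$ directly rather than via the product $B=A_1\times A_2$, but this is cosmetic.)

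Part~1 has a genuine gap at the ``reassembling'' step. Poincar\'e reducibility only decomposes $A_i$ into $K$-simple factors \emph{up to $K$-isogeny}; in general $A_i$ is not itself a product of $K$-simple abelian varieties. Even if you produce, on each simple factor $B_j$, an endomorphism carrying the image of $R_1$ to that of $R_2$, there is no mechanism for lifting such a collection of maps on the $B_j$ back to a $K$-isomorphism $A_1\to A_2$: two isogenous abelian varieties with the same simple factors need not be $K$-isomorphic. There is also a secondary issue, namely that projecting through an isogeny $A_i\to\prod B_j$ alters orders modulo $\frakp$, so it is not clear the support condition survives on the individual factors in the form Khare--Prasad requires.

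The paper takes a different route for part~1. For each prime $\ell$, Theorem~\ref{classy-l} yields $\phi_\ell\in\Hom_K(A_1,A_2)$ with $\phi_\ell(R_1)=c_\ell R_2$ and $\gcd(c_\ell,\ell)=1$; this ``$c$ coprime to $\ell$'' clause is exactly where Zariski-density is invoked. Since no prime divides every $c_\ell$, a finite $\Z$-linear combination of the $\phi_\ell$ achieves $c=1$. By symmetry one also gets $c'=1$, and then $\gamma'\gamma=[1]$ forces $\gamma$ to be a $K$-isomorphism sending $R_1$ to $R_2$.
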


We prove this result at the end of section~\ref{sec-condition}.
An important special case is when $A$ is $K$-simple
because then $\Z R$ is Zariski-dense in $A$ for any point $R$ of infinite order.
Notice that we had to assume that $\Z R$ is Zariski-dense in $A$: for example the point $R$ in $A$ and the point $(R,0)$ in the square of $A$ give rise to the same sequences.

\textbf{Acknowledgements.}
The authors would like to thank Jan Van Geel for giving very helpful advice
concerning the theory of maximal orders and Marc Hindry for useful explanations and comments.
The authors are also indebted to Arno Fehm for providing a proof of Proposition~\ref{arno},
which permitted to get rid of a technical condition in the main result.

\section{A result on maximal orders}

We begin by recalling some notions concerning algebras, modules and orders.
We mainly refer to \cite{reiner-maxord}.

Let $R$ be an associative ring with $1$, not necessarily commutative
and without zero divisors.
By ``$R$-module'', if not specified otherwise, we mean \emph{left} $R$-module.

Let $\calM$ be an $R$-module.
We say that $\calM$ is \emph{torsion-free}
if for all $\alpha \in R \setminus \{0\}$ and $P \in \calM \setminus \{0\}$,
we have $\alpha P \neq 0$.
We say that $\calM$ is \emph{divisible} if,
for every $P \in \calM$ and every $\alpha \in R \setminus \{0\}$,
there exists $Q \in \calM$ such that $P = \alpha Q$.

\begin{define}
Let $G$ be an $R$-module.
Let $\calM$ be a torsion-free submodule of $G$ and let $P \in G$.
We say that $P$ is \emph{independent} from $\calM$ if $\alpha P \notin \calM$
for all $\alpha \in R \setminus \{0\}$.
\end{define}

\begin{lemma}\label{infinite-rank-equiv}
Let $G$ be an $R$-module.
The following are equivalent:
\begin{compactenum}
\item\label{infinite-rank-equiv1} $G$ contains a free $R$-module of infinite rank.
\item\label{infinite-rank-equiv2} For all finitely generated $R$-modules $\calM \subseteq G$,
	there exists some $P \in G$ which is independent from $\calM$.
\end{compactenum}
\end{lemma}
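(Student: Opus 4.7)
The plan is to prove the two implications separately: an inductive construction will give $\eqref{infinite-rank-equiv2} \Rightarrow \eqref{infinite-rank-equiv1}$, and a contradiction argument will give the converse.

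For $\eqref{infinite-rank-equiv2} \Rightarrow \eqref{infinite-rank-equiv1}$ I will build a sequence $P_1, P_2, \dots$ in $G$ inductively. Applying $\eqref{infinite-rank-equiv2}$ to the trivial submodule $\calM_0 = 0$ yields $P_1 \in G$ with $\alpha P_1 \neq 0$ for every nonzero $\alpha$, so $R P_1$ is free of rank one. Suppose $\calM_n = R P_1 \oplus \cdots \oplus R P_n$ has been built and is free of rank $n$; then $\calM_n$ is finitely generated and torsion-free, so $\eqref{infinite-rank-equiv2}$ produces $P_{n+1}$ with $\alpha P_{n+1} \notin \calM_n$ for all $\alpha \in R \setminus \{0\}$. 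This forces $R P_{n+1}$ to be free of rank one (take $\calM = 0$ in the previous observation, which is recovered by $0 \in \calM_n$) and to intersect $\calM_n$ trivially, so $\calM_{n+1} := \calM_n \oplus R P_{n+1}$ is free of rank $n+1$. The ascending union $\bigcup_n \calM_n$ is then a free $R$-submodule of $G$ of countably infinite rank.

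For $\eqref{infinite-rank-equiv1} \Rightarrow \eqref{infinite-rank-equiv2}$, I will fix a free submodule $F = \bigoplus_{i \in I} R e_i \subseteq G$ with $I$ infinite and a finitely generated torsion-free $\calM \subseteq G$, and argue by contradiction. Suppose no $P \in G$ is independent from $\calM$. Then for each $i \in I$ there exists $\alpha_i \in R \setminus \{0\}$ with $\alpha_i e_i \in \calM$, and torsion-freeness of $\calM$ forces $\alpha_i e_i \neq 0$. The family $\{\alpha_i e_i\}_{i \in I}$ is then $R$-linearly independent: a relation $\sum_i r_i \alpha_i e_i = 0$ yields $r_i \alpha_i = 0$ by the freeness of $F$, and then $r_i = 0$ because $R$ is a domain and $\alpha_i \neq 0$. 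Hence $\calM$ itself contains a free $R$-submodule of infinite rank.

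The hard part will be closing this out: I need to conclude that a finitely generated submodule of $G$ cannot itself contain a free $R$-module of infinite rank. In the setting of the paper, where $R$ will be an order in a division algebra and in particular left Noetherian, every submodule of $\calM$ is itself finitely generated, which immediately rules out free submodules of infinite rank. This is the step where a ring-theoretic property of $R$ beyond being a domain is implicitly used, and it is the one point at which one should pause to record the hypothesis being invoked.
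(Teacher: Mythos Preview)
Your argument is correct and follows the same two-part strategy as the paper. For $\eqref{infinite-rank-equiv2}\Rightarrow\eqref{infinite-rank-equiv1}$ the inductive construction is identical to the paper's. For $\eqref{infinite-rank-equiv1}\Rightarrow\eqref{infinite-rank-equiv2}$ the paper is slightly more economical: instead of using the whole infinite basis, it selects just $n+1$ basis vectors $B_1,\dots,B_{n+1}$, where $n$ is the number of generators of $\calM$, and derives the contradiction from the fact that $n+1$ $R$-independent elements cannot lie in an $n$-generated module. Your route reaches the same endpoint via Noetherianity, and you are right to flag explicitly that a hypothesis on $R$ beyond ``no zero divisors'' is being used; the paper leaves that step implicit.

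One small slip: the nonvanishing of $\alpha_i e_i$ follows from the freeness of $F$ (the $e_i$ are a free basis and $\alpha_i\neq 0$), not from torsion-freeness of $\calM$; torsion-freeness of $\calM$ tells you nothing about $\alpha_i e_i$ since $e_i$ need not lie in $\calM$.
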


\begin{proof}
\textbf{\ref{infinite-rank-equiv1} $\Rightarrow$ \ref{infinite-rank-equiv2}}:
Let $n \in \N$ be such that $\calM$ can be generated by $n$ elements.
By assumption, $G$ contains a free submodule $\calB = R B_1 \oplus \dots \oplus R B_{n+1}$.
Suppose that none of the points $B_i$ is independent from $\calM$.
Then there would exist $\alpha_i \in R\setminus \{0\}$ such that $\alpha_i B_i \in \calM$ for all $i = 1, \dots, n+1$.
Since $\calM$ is generated by $n$ elements,
there must be some non-trivial linear combination $\sum \beta_i (\alpha_i B_i)$ which is zero.
This is a contradiction.

\textbf{\ref{infinite-rank-equiv2} $\Rightarrow$ \ref{infinite-rank-equiv1}}:
We reason by induction.
Clearly, $\{0\} \subseteq G$ is free of rank $0$.
Let $\calM \subseteq G$ be a free $R$-module of rank $n$.
Since $\calM$ is finitely generated, there exists a $P \in G$
which is independent from $\calM$.
Then $\calM + R P \simeq R^{n+1}$.
Indeed, suppose that $Q + \alpha P = 0$ for some $Q \in \calM$ and $\alpha \in R$.
Then $\alpha P \in \calM$, therefore $\alpha = 0$ and also $Q = 0$.
\end{proof}

We recall the definition of tensor products for modules
over a ring which is not necessarily commutative:
\begin{define*}
Let $\calM$ be a right $R$-module and $\calN$ a left $R$-module.
Then the \emph{tensor product} $\calM \otimes_R \calN$
is the free abelian group on the symbols $m \otimes n$,
where $m \in \calM$ and $n \in \calN$,
modulo the relations
$(m + m') \otimes n = m \otimes n + m' \otimes n$,
$m \otimes (n + n') = m \otimes n + m \otimes n'$,
$(mr) \otimes n = m \otimes (rn)$
for all $m, m' \in \calM$, $n, n' \in \calN$, $r \in R$.

This tensor product is always an abelian group,
but in general not an $R$-module.
If $\calM$ is a two-sided $R$-module, then $\calM \otimes_R \calN$
becomes a left $R$-module by defining $r(m \otimes n) := (rm) \otimes n$.
\end{define*}

In this paper, a $\Q$-algebra means a ring $D \supseteq \Q$ which is a finite dimensional $\Q$-vector space.
We do not assume that the centre is exactly $\Q$.

Let $D$ be a $\Q$-algebra.
An \emph{order} in $D$ is a subring $R \subseteq D$ whose additive group is finitely generated
and such that $\Q R = D$.
A \emph{maximal order} is an order which is not contained in any larger order.
If $D$ is a number field, the ring of integers is the unique maximal order.

\begin{prop}
Let $R$ be a maximal order in a $\Q$-division algebra $D$.
The centre of $R$ is the ring of integers of the number field $K$,
where $K$ denotes the centre of $D$.
\end{prop}

\begin{proof}
Let $\calO_K$ denote the ring of integers of $K$.
We have $\Q R = D$, therefore the centre of $R$ is $R \cap K$.
Since $R \cap K$ is an order in $K$, we must have $R \cap K \subseteq \calO_K$.
Conversely, $\calO_K R$ is an order in $D$.
Since $R$ is a maximal order, this implies that $\calO_K \subseteq R$.
We conclude that $\calO_K = R \cap K$.
\end{proof}

\begin{lemma}\label{projective}
Let $D$ be a $\Q$-division algebra and let $R$ be a maximal order in $D$.
Let $\calM$ be a finitely generated and torsion-free $R$-module.
Then $\calM$ is projective.
\end{lemma}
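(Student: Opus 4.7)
The plan is to reduce the statement to the classical structure theorem for finitely generated torsion-free modules over a maximal order in a semisimple $\Q$-algebra, which is proved in Reiner's book.

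First, I would build the ``rational envelope'' of $\calM$. Because $R$ is an order, it is a finitely generated $\Z$-module, so the finitely generated $R$-module $\calM$ is in particular a finitely generated abelian group. Let $V := \Q \otimes_\Z \calM$; this is a finite-dimensional $\Q$-vector space. The $R$-action on $\calM$ extends to $V$, and since $D = \Q R$, this action extends uniquely to a left $D$-module structure on $V$. Because $D$ is a division ring and $V$ is a finitely generated left $D$-module, $V \cong D^n$ for some $n \geq 0$. Torsion-freeness of $\calM$ over $R$ forces torsion-freeness over the subring $\Z \subseteq R$, so the canonical map $\calM \to V$, $m \mapsto 1 \otimes m$, is injective. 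By construction $\calM$ spans $V$ over $\Q$, hence $\calM$ sits inside $V \cong D^n$ as a full $R$-lattice.

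Second, I would invoke the structure theorem for full $R$-lattices over a maximal order in a $\Q$-division algebra: any such lattice is isomorphic as a left $R$-module to a finite direct sum of fractional left ideals of $R$ (this is the Steinitz-style classification for maximal orders, see \cite{reiner-maxord}). Every fractional left ideal of a maximal order is an invertible bimodule and in particular is projective as a left $R$-module, so any finite direct sum of such ideals is projective. Thus $\calM$ is projective.

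The main obstacle, if one wanted a self-contained argument rather than citing Reiner, would be the decomposition in the second step. The cleanest route is to first show that $R$ is \emph{hereditary}, i.e.\ every left ideal is projective; this is done by localizing at each maximal ideal of $\calO_K$, where the completed maximal order is a principal ideal ring in the non-commutative sense and ideals are even free. From hereditariness together with an induction on the $D$-rank $n$ of the lattice, one splits off a projective direct summand at each stage and recovers the decomposition into fractional ideals.
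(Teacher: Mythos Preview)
Your argument is correct, and it draws on the same body of results from Reiner as the paper does, but the organisation is different. The paper's proof is more economical: it first quotes that a maximal $\Z$-order in a semisimple $\Q$-algebra is hereditary (Reiner (21.4)), so that every submodule of a free $R$-module is projective (Reiner (2.44)); then it simply embeds $\calM$ into a free module by forming $V = D \otimes_R \calM$ (which coincides with your $\Q \otimes_\Z \calM$), choosing a $D$-basis, and clearing a denominator. Your route instead invokes the full Steinitz-type decomposition of an $R$-lattice into fractional ideals, which is a stronger statement than what is needed and itself rests on hereditariness. Your final paragraph in fact sketches exactly the paper's shortcut, so you had both arguments in hand; the paper just takes the shorter one.
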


\begin{proof}
Since $R$ is a maximal $\Z$-order in the $\Q$-algebra $D$,
it follows from \cite[(21.4)]{reiner-maxord} that $R$ is a left (and right) hereditary ring.
Such rings have the property
that all submodules of free modules are projective,
see \cite[(2.44)]{reiner-maxord}.
So it suffices to show that $\calM$ can be embedded in a free $R$-module.

Define $V := D \otimes_R \calM$.
Since $\calM$ is torsion-free, the map
$$
	\theta: \calM \to D \otimes_R \calM; m \mapsto 1 \otimes m
$$ is an embedding of $R$-modules.
Let $\{v_1, \dots, v_r\}$ be a basis of $V$ as $D$-vector space.
Since $\Q R = D$, there exists $b \in \Z \setminus \{0\}$ such that $b \theta(\calM)$
is contained in $R v_1 \oplus \dots \oplus R v_r$.
Then the map $\calM \to V: m \mapsto b \theta(m)$ embeds $\calM$ into $R^r$.
\end{proof}

\begin{theorem}\label{free-module}
Let $D$ be a $\Q$-division algebra and let $R$ be a maximal order in $D$.
Let $G$ be an $R$-module containing a submodule isomorphic to $R^\N$.
Let $\calM \subseteq \calN$ be finitely generated and torsion-free submodules of $G$.
Then there exists a finitely generated free module $\calF \subseteq G$
such that $\calF \cap \calN = \calM$.
\end{theorem}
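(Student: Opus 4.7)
The plan is to realize an abstract complement of $\calM$ inside $G$ in such a way that it intersects $\calN$ trivially, and then take $\calF$ to be the direct sum inside $G$.

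By Lemma~\ref{projective}, $\calM$ is projective, so there exist an integer $r \geq 0$ and an $R$-module $\calM''$ together with an abstract isomorphism $\psi: \calM \oplus \calM'' \simto R^r$.

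Next, I construct a free submodule $\calF_1 \subseteq G$ of rank exactly $r$ with $\calF_1 \cap \calN = 0$. I build it inductively: suppose $P_1, \dots, P_{i-1} \in G$ have already been chosen so that $\calN_i := \calN + RP_1 + \dots + RP_{i-1}$ is finitely generated and torsion-free. Since $G$ contains a copy of $R^\N$, Lemma~\ref{infinite-rank-equiv} furnishes a point $P_i \in G$ which is independent from $\calN_i$. A short verification (using that $R$ has no zero divisors, as it sits in the division algebra $D$) shows that $\calN_{i+1} = \calN_i + RP_i$ is again torsion-free, so the induction proceeds. After $r$ steps, the reasoning from the implication \ref{infinite-rank-equiv2}~$\Rightarrow$~\ref{infinite-rank-equiv1} of Lemma~\ref{infinite-rank-equiv} shows that $\calF_1 := RP_1 \oplus \dots \oplus RP_r$ is free of rank $r$, and the same argument applied to an element $\sum \alpha_i P_i \in \calN$ (peel off $P_r$ using independence, then iterate) gives $\calF_1 \cap \calN = 0$.

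Now transport $\calM''$ into $G$: composing the inclusion $\calM'' \hookrightarrow \calM \oplus \calM''$ with $\psi$ and with the chosen isomorphism $R^r \simto \calF_1$ yields an embedding $j: \calM'' \hookrightarrow \calF_1 \subseteq G$. Set $\calF := \calM + j(\calM'') \subseteq G$. To see this is the internal direct sum, suppose $m + j(m'') = 0$ with $m \in \calM$ and $m'' \in \calM''$: then $j(m'') = -m \in \calM \subseteq \calN$, while $j(m'') \in \calF_1$, so $j(m'') \in \calF_1 \cap \calN = 0$; injectivity of $j$ forces $m'' = 0$ and then $m = 0$. Consequently the map $\calM \oplus \calM'' \to \calF$, $(m, m'') \mapsto m + j(m'')$, is an isomorphism, which combined with $\psi$ gives $\calF \simeq R^r$, a finitely generated free module.

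Finally, $\calF \cap \calN = \calM$: the inclusion $\calM \subseteq \calF \cap \calN$ is immediate, and conversely any element $m + j(m'') \in \calN$ satisfies $j(m'') = (m + j(m'')) - m \in \calN$, so $j(m'') \in \calF_1 \cap \calN = 0$, giving $m + j(m'') = m \in \calM$. The only place requiring genuine care is the inductive construction of $\calF_1$, where one must keep track of the fact that \emph{independent from a torsion-free module} propagates torsion-freeness and rank-$r$ freeness simultaneously; everything else is formal bookkeeping with direct sums.
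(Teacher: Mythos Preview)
Your proof is correct and follows essentially the same approach as the paper: both use projectivity of $\calM$ to split off an abstract complement, build a free rank-$r$ submodule of $G$ meeting $\calN$ trivially by inductively choosing independent points via Lemma~\ref{infinite-rank-equiv}, embed the complement there, and set $\calF$ to be the resulting internal direct sum. The only differences are notational and in the level of detail---you spell out the torsion-freeness propagation and the internal-direct-sum verification more explicitly than the paper does.
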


\begin{proof}
By Lemma~\ref{projective}, $\calM$ is projective.
This means that there exists an abstract $R$-module $\calA$ such that $\calM \oplus \calA \simeq R^r$
for some $r \geq 0$.

By Lemma~\ref{infinite-rank-equiv},
there exists $B_1 \in G$ which is independent from $\calN$.
Since $\calN$ and $R B_1$ are torsion-free, also $\calN \oplus R B_1$ is torsion-free.
Analogously, we can find $B_2, \dots, B_r$ in $G$ such that, for all $k = 2, \dots, r$,
the point $B_k$ is independent from $\calN \oplus \form{B_1, \ldots, B_{k-1}}$.
Eventually, we get a finitely generated and torsion-free module $\calN \oplus \form{B_1, \ldots, B_r}$.

Since $\calM \oplus \calA \simeq R^r \simeq \form{B_1, \dots, B_r}$,
we can see $\calA$ as a submodule of $\form{B_1, \dots, B_r}$.
Now $\calM$ and $\calA$ are submodules of $G$ satisfying
$\calM \cap \calA \subseteq \calN \cap \form{B_1, \dots, B_r} = \{0\}$.
Let $\calF := \calM \oplus \calA$.
We clearly have $\calM \subseteq \calF \cap \calN$.

Let $P \in \calF \cap \calN$.
We need to show that $P \in \calM$.
We can write $P = P_\calM + P_\calA$ with $P_\calM \in \calM$ and $P_\calA \in \calA$.
Since $P \in \calN$ and $P_\calM \in \calN$, we also have $P_\calA \in \calN$.
But $\calN \cap \calA = \{0\}$, therefore $P_\calA = 0$ and $P = P_\calM$.
\end{proof}

\section{Preliminaries on abelian varieties}

Let $A$ be an abelian variety defined over a number field $K$
and let $L$ be an extension of $K$.
We write $\End_L(A)$ for the ring of endomorphisms of $A$ which are defined over $L$.
Let $D := \End_K(A) \otimes_\Z \Q$.
Then $D$ is a finite dimensional $\Q$-algebra and $\End_K(A)$ is an order in $D$.
If $A$ is $K$-simple, then $\End_K(A)$ does not contain any zero divisors
and $D$ is a division algebra.

The group $A(\bar{K})$ is a divisible $\Z$-module (\cite[Theorem 7.2]{mumford}).
If $A$ is $K$-simple, then $A(\bar{K})$
is also a divisible $\End_K(A)$-module:
this is because every non-zero element of $\End_K(A)$ is an isogeny and thus it 
divides the multiplication by some non-zero integer.

\begin{define}
We say that a point in $A(K)$ of infinite order is \emph{independent}
if it generates a free $\End_K(A)$-module or, equivalently,
a free $\End_{\bar{K}}(A)$-module.
This is also equivalent to the fact that $\mathbb ZR$ is Zariski-dense in $A$.
See \cite[Section 2]{Perucca1}.
We say that finitely many points $\{P_1, \ldots, P_n\}$ on $n$ abelian varieties
$A_1, \ldots, A_n$ are independent if the point $(P_1,\ldots, P_n)$
in $\prod_{i=1}^n A_i(K)$ is independent.
\end{define}

\begin{prop}\label{arno}
Let $K$ be a number field and fix an algebraic closure $\bar{K}$ of $K$.
Let $F \subseteq \bar{K}$ be a finite extension of $K$.
Then there exists an extension $E \subseteq \bar{K}$ of $K$
such that $E \cap F = K$ and such that,
for every abelian variety $A/K$ of positive dimension, $A(E)$ has infinite rank.
\end{prop}

\begin{proof}
Without loss of generality, we may assume that $F/K$ is Galois.
Let $\{\sigma_1, \ldots, \sigma_e\}$ be generators of $\gal(F/K)$.
We can equip $\gal(\bar{K}/K)$ with the normalized Haar measure
and consider the product measure on $\gal(\bar{K}/K)^e$.
By translation invariance, the set of all lifts of $(\sigma_1, \ldots, \sigma_e)$ in $\gal(\bar{K}/K)^e$
has the same measure as the set of lifts of $(\mathrm{id},\ldots, \mathrm{id})$.
Therefore, the set of lifts of $(\sigma_1, \ldots, \sigma_e)$ has positive measure $[F:K]^{-e}$ (\cite[Lemma 1.1]{FreyJarden}).
So by \cite[Theorem 9.1]{FreyJarden} there exists a lift
$(\tau_1,\ldots, \tau_e)$ of $(\sigma_1, \ldots, \sigma_e)$ in $\gal(\bar{K}/K)^e$ such that the following holds:
for all abelian varieties $A/K$ of positive dimension,
the rank of $A(E)$ is infinite, where $E$ is the subfield of $\bar{K}$ fixed by $\{\tau_1, \ldots, \tau_e\}$.
Every element of $F \cap E$ must be fixed by $\{\sigma_1, \ldots, \sigma_e\}$,
therefore $E \cap F = K$.
\end{proof}

Every simple abelian variety is isogenous to a simple abelian variety whose endomorphism ring is a maximal order in a division algebra:

\begin{prop}\label{maxorder-isogenyclass}
Let $A$ be an abelian variety defined over a number field $K$ and assume that $A$ is $K$-simple.
Let $R := \End_K(A)$ and $D := R \otimes_\Z \Q$.
Let $\Lambda$ be a maximal order in $D$.
There exists an abelian variety $B$ defined over $K$ which is $K$-isogenous to $A$ and such that $\End_K(B) \simeq \Lambda$.
\end{prop}

\begin{proof}
Since $R$ and $\Lambda$ are full-rank lattices in the same $\Q$-vector space,
we can take an $n \in \Z \setminus \{0\}$ such that $n \Lambda \subseteq R$.
Define
$$
	\Delta := n \Lambda
	\quad \text{and} \quad
	H := \set{T \in A(\bar{K})}{\Delta \cdot T = 0}
$$

Since $H$ is contained in $A[n]$, it is a finite group.
Consider the quotient abelian variety $B := A/H$.
Since the endomorphisms in $\Delta \subseteq R$ are defined over $K$,
it follows that $H$ is stable under $\gal(\bar{K}/K)$.
Therefore, $B$ and the projection isogeny $\pi: A \to B$
are defined over $K$.

Now we prove that the endomorphism ring of $B$ is $\Lambda$. 
Since $A$ and $B$ are $K$-isogenous, they have the same $K$-endomorphism algebra; hence, $\End_K(B)$ is an order in $D$.
Since $A(\bar{K})$ is divisible, we can write every point $P$ in $B(\bar{K})$ as $P = \pi(n \hat{P})$ for some $\hat{P} \in A(\bar{K})$. 
Let $\alpha$ be in $\Lambda$ and remark that $\alpha n = n \alpha$ belongs to $R$. Thus we can define 
$$
	\alpha P = \pi((\alpha n) \hat{P})
.$$

This definition does not depend on the choice of $\hat{P}$.
Indeed, let $P = \pi(n \hat{P}')$.
Then the difference $n(\hat{P} - \hat{P}')$ is in $H$ because $\pi$ maps it to $0$.
This implies $\Delta n(\hat{P} - \hat{P}') = 0$.
Since $\Delta$ is a right $\Lambda$-module, we have $\Delta \alpha n \subseteq \Delta n$ and so $\Delta \alpha n (\hat{P} - \hat{P}') = 0$.
This means that $\pi((\alpha n) \hat{P})=\pi((\alpha n) \hat{P}')$.

It is clear that $\alpha$ is an endomorphism
and that the above map $\Lambda \to \End_K(B)$ is an injection of rings.
Since $\Lambda$ is a maximal order, this must be an isomorphism.
\end{proof}

\section{The condition of the support problem}\label{sec-condition}

Let $A$ be an abelian variety defined over a number field $K$ and let $P$ and $Q$ be points in $A(K)$.
The support problem asks whether there exist a $K$-endomorphism of $A$ which maps $P$ to $Q$,
provided that the following condition is satisfied:

\begin{cond*}[SP]
For all but finitely many primes $\mathfrak p$ of $K$, the order of $(Q \bmod \mathfrak p)$ divides the order of $(P \bmod \mathfrak p)$.
\end{cond*}

We reformulate the condition of the support problem
by using $\End_K(A)$-modules instead of points on $A(K)$.
Let $Q$ be a point in $A(K)$ and let $\calM$ be an $\End_K(A)$-submodule of $A(K)$.
The condition of the support problem for modules is the following:

\begin{cond*}[SPM]
For all but finitely many primes $\mathfrak p$ of $K$,
the order of $(Q \bmod \mathfrak p)$ divides the exponent of $(\calM \bmod{\frakp})$.
\end{cond*}

The question now is whether $Q$ belongs to $\calM$.
For free modules, we have the analogue of {\cite[Proposition 9]{Perucca2}}:

\begin{theorem}\label{supp-free}
Let $A$ be an abelian variety defined over a number field $K$.
Let $\calF$ be a free $\End_K(A)$-submodule of $A(K)$ and let $Q \in A(K)$.
If $Q$ and $\calF$ satisfy Condition~(SPM), then $Q \in \calF$.
\end{theorem}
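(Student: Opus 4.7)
The plan is to reduce to the single-point case \cite[Proposition 9]{Perucca2} by passing to the power abelian variety $A^n$, where $n$ denotes the $R$-rank of $\calF$ with $R := \End_K(A)$.

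First I would pick a basis $P_1, \ldots, P_n$ of $\calF$, so that $\calF = R P_1 \oplus \cdots \oplus R P_n$, and form the point $\mathbf{P} := (P_1, \ldots, P_n) \in A^n(K)$. Recall that $\End_K(A^n) = \mathrm{Mat}_n(R)$.

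The key observation is that $\mathbf{P}$ is an independent point of $A^n$. Indeed, for any matrix $M = (M_{ij}) \in \mathrm{Mat}_n(R)$ satisfying $M \mathbf{P} = 0$, each row gives $\sum_j M_{ij} P_j = 0$ in $A(K)$, and freeness of $\calF$ as an $R$-module forces $M_{ij} = 0$ for all $i, j$. Hence $\mathbf{P}$ generates a free $\End_K(A^n)$-module, and so is independent.

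Next I would verify that the pair consisting of $\mathbf{P}$ and $(Q, 0, \ldots, 0) \in A^n(K)$ satisfies the ordinary support condition (SP) in $A^n$. This is a routine calculation: on one hand $\ord\bigl((Q, 0, \ldots, 0) \bmod \mathfrak{p}\bigr) = \ord(Q \bmod \mathfrak{p})$; on the other, since $R \supseteq \Z$, each $R P_i \bmod \mathfrak{p}$ is a subgroup of exponent $\ord(P_i \bmod \mathfrak{p})$, whence $\exp(\calF \bmod \mathfrak{p}) = \lcm_i \ord(P_i \bmod \mathfrak{p}) = \ord(\mathbf{P} \bmod \mathfrak{p})$. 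Therefore (SPM) for $\calF$ translates exactly into (SP) for the pair $\bigl(\mathbf{P}, (Q, 0, \ldots, 0)\bigr)$ in $A^n$.

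Applying \cite[Proposition 9]{Perucca2} to $A^n$ then gives $(Q, 0, \ldots, 0) \in \mathrm{Mat}_n(R) \cdot \mathbf{P}$. Unpacking, a generic element of $\mathrm{Mat}_n(R) \cdot \mathbf{P}$ is a vector whose $i$-th coordinate has the form $\sum_j M_{ij} P_j \in \calF$; reading off the first coordinate produces $Q \in \calF$, as required. The only real idea is the reduction to $A^n$, and I do not anticipate a serious obstacle beyond this observation.
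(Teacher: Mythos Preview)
Your proposal is correct and follows essentially the same route as the paper: choose a basis $P_1,\dots,P_n$ of $\calF$, pass to $A^n$, note that $(P_1,\dots,P_n)$ is independent, apply \cite[Proposition 9]{Perucca2} to $(P_1,\dots,P_n)$ and $(Q,0,\dots,0)$, and read off $Q\in\calF$. You merely spell out in more detail the verification that independence and Condition~(SP) hold in $A^n$, which the paper leaves implicit.
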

\begin{proof}
Let $\{P_1, \dots, P_n\}$ be a basis of $\calF$
and consider $P' = (P_1, \dots, P_n) \in A^n(K)$.
Since $\calF$ is a free module, the point $P'$ is independent in $A^n$.
Then we can apply \cite[Proposition 9]{Perucca2} to the points $P' = (P_1, \dots, P_n)$ and $Q' = (Q, 0, \dots, 0)$ in $A^n(K)$.
We find $Q' = \phi(P')$ for some $\phi \in \End_K(A^n)$, which implies $Q \in \calF$.
\end{proof}

It is important to note that the conditions (SP) and (SPM)
do not depend on the field:
if the condition is satisfied over a field $K$,
it is also satisfied over any finite extension.

In this paper, coherently to the other references on the support problem,
we consider the conditions (SP) and (SPM) for all but finitely many primes $\frakp$ of $K$.
However,
it is possible to require the conditions only for a set of primes $\frakp$ of $K$ of Dirichlet density $1$.
The same results hold as soon as the proofs are based on the Cebotarev Density Theorem.
For example, one has:

\begin{theorem}[{\cite[Corollary 8 and Proposition 9]{Perucca2}}] \label{classy-l}
Let $A$ and $A'$ be products of an abelian variety and a torus defined over a number field $K$.
Let $R$ be a point in $A(K)$ and let $R'$ be a point in $A'(K)$.
Let $\ell$ be a rational prime and let $S$ be a set of primes of $K$ of Dirichlet density $1$.
Suppose that for every $\frakp \in S$ we have
$$\ord_\ell(R \bmod \mathfrak p)\geq \ord_\ell(R' \bmod \mathfrak p)\,.$$
Then there exist $\phi \in \Hom_K(A, A')$ and a non-zero integer $c$
such that $\phi(R)=cR'$.
If $\Z R$ is Zariski dense in $A$, one can take $c$ coprime to $\ell$.
\end{theorem}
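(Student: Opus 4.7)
The plan is to combine $A$ and $A'$ into the single commutative group variety $B := A \times A'$ and to study the pair $(R,R') \in B(K)$ via Kummer theory for the $\ell$-adic Tate module. Since the hypotheses and conclusion are stable under a finite extension of $K$ (as noted just before the theorem), I may first enlarge $K$ so that all geometric endomorphisms of $B$ are defined over $K$ and a convenient finite set of torsion points is $K$-rational.

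Next I would translate the numerical hypothesis into a Galois-theoretic one via Cebotarev. For each $n \geq 1$ choose $R_n, R'_n \in B(\bar K)$ with $\ell^n R_n = R$ and $\ell^n R'_n = R'$, giving rise to Kummer cocycles
\[
\kappa_R \colon \gal(\bar K/K) \to T_\ell(A), \qquad \kappa_{R'} \colon \gal(\bar K/K) \to T_\ell(A').
\]
The inequality $\ord_\ell(R \bmod \frakp) \geq \ord_\ell(R' \bmod \frakp)$ says precisely that whenever a Frobenius at $\frakp$ fixes $R_n$ modulo $B[\ell^n]$, it also fixes $R'_n$ modulo $B[\ell^n]$. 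Since $S$ has Dirichlet density one, Cebotarev upgrades this from Frobenius classes to all of $\gal(\bar K/K)$, and passing to the limit in $n$ gives the clean Galois-theoretic containment $\ker(\kappa_R) \subseteq \ker(\kappa_{R'})$, compatibly with the $\End_K(B) \otimes \Z_\ell$-action.

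Third, I would upgrade this kernel-containment to an algebraic relation via the standard open-image theorems of Bashmakov--Ribet type for the Kummer representation (with their analogues for tori). These describe the image of $\gal(\bar K/K)$ in $\aut(T_\ell(B)) \ltimes T_\ell(B)$ attached to $\kappa_R \oplus \kappa_{R'}$, up to finite index, as the largest subgroup compatible with the $\End_K(B) \otimes \Z_\ell$-module structure and the algebraic relations already in force between $R$ and $R'$. The inclusion of kernels therefore produces $\phi_\ell \in \Hom_K(A,A') \otimes \Z_\ell$ and $c_\ell \in \Z_\ell \setminus \{0\}$ with $\phi_\ell(R) = c_\ell R'$. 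As $\Hom_K(A,A')$ is a finitely generated $\Z$-module sitting $\ell$-adically densely in its $\Z_\ell$-completion, I can approximate $\phi_\ell$ by some $\phi \in \Hom_K(A,A')$ and, after clearing the resulting $\ell$-power denominator, produce a non-zero integer $c$ with $\phi(R) = cR'$.

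For the refinement, if $\Z R$ is Zariski-dense in $A$ then the Kummer cocycle $\kappa_R$ has open image in $T_\ell(A)$ as an $\End_K(A) \otimes \Z_\ell$-module; in that case the multiplier $c_\ell$ can already be chosen to be a unit in $\Z_\ell$, so $c$ may be taken coprime to $\ell$. The main obstacle I expect is the third step: passing from a kernel-containment of Kummer cocycles to an honest element of $\Hom_K(A,A')$ genuinely requires open-image / Mumford--Tate input on the Galois action on Tate modules, and is where the real content of \cite[Corollary 8 and Proposition 9]{Perucca2} resides; the Cebotarev step and the final descent from $\Z_\ell$ to $\Z$ are comparatively routine.
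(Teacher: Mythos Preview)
The paper does not give its own proof of this statement: Theorem~\ref{classy-l} is simply quoted from \cite[Corollary~8 and Proposition~9]{Perucca2} and then used as a black box to deduce the corollary that follows it. So there is nothing in the present paper to compare your proposal against.

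That said, a few comments on your outline. The overall shape---pass via Cebotarev from the reduction hypothesis to a statement about Kummer cocycles in $T_\ell(B)$, then use open-image results for the arithmetic Kummer representation to extract an endomorphism---is indeed how such arguments go, and you rightly locate the real content in your third step and defer it back to \cite{Perucca2}. Two places deserve more care. First, the dictionary in your second step is not quite right: the condition $\ord_\ell(R \bmod \frakp) \geq \ord_\ell(R' \bmod \frakp)$ is a statement about the $\ell$-part of the \emph{order} of the reductions, whereas ``Frobenius fixes $R_n$'' is a statement about $\ell$-power \emph{divisibility} of $(R \bmod \frakp)$ in $A(\kappa(\frakp))$; these are related but not identical (in a non-cyclic $\ell$-group, large order does not translate cleanly into small divisibility), and the correct translation has to track the Galois action on $A[\ell^\infty]$ as well as the cocycle. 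Second, your descent from $\Z_\ell$ to $\Z$ is too quick: approximating $\phi_\ell$ by some $\phi \in \Hom_K(A,A')$ does not by itself force $\phi(R)$ to land in $\Z R'$; one needs to argue more structurally, e.g.\ by showing that the $\End_K(B)$-submodule of $B(K)$ generated by $(R,R')$ already witnesses the desired relation. None of this is fatal to the strategy, but as written your proposal is an outline of what \cite{Perucca2} contains rather than an independent proof.
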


\begin{cor}
Let $A$ be the product of an abelian variety and a torus defined over a number field $K$.
Let $R$ be a point in $A(K)$ such that $\mathbb Z R$ is Zariski-dense in $A$.
Let $S$ be a set of primes of $K$ of Dirichlet density $1$.
\begin{enumerate}
\item The sequence
$$\{\ord(R \bmod \mathfrak p)\}_{\mathfrak p\in S}$$
determines the $K$-isomorphism class of $A$ and determines $R$ up to $K$-isomorphism.
\item Let $\ell$ be a prime number and write $\ord_\ell$ for the $\ell$-adic valuation of the order.
The sequence
$$\{\ord_\ell(R \bmod \mathfrak p)\}_{\mathfrak p\in S}$$
determines the $K$-isogeny class of $A$.
\end{enumerate}
\end{cor}
\begin{proof}
Let $A'$ be the product of an abelian variety and a torus
defined over $K$ and let $R'$ be a point in $A'(K)$ such that $\mathbb Z R'$ is Zariski-dense in $A'$.
Suppose that $\ord_\ell(R \bmod \frakp)=\ord_\ell(R' \bmod \frakp)$
for every $\frakp \in S$.
Then by Theorem~\ref{classy-l} there exist an integer $c$ coprime to $\ell$
and a $K$-homomorphism $\phi$ from $A$ to $A'$ mapping $R$ to $c R'$
and analogously there exist an integer $c'$ coprime to $\ell$
and a $K$-homomorphism $\phi'$ from $A'$ to $A$ mapping $R'$ to $c' R$.
Then $\phi' \circ \phi$ maps $R$ to $c' c R$.
Since $\Z R$ is Zariski dense in $A$, we deduce $\phi'\circ \phi=[c' c]$.
In particular, $\phi$ is an isogeny of degree coprime to $\ell$.
Now suppose that $\ord(R \bmod \frakp) = \ord(R' \bmod \frakp)$ for every $\frakp \in S$.
Then we can take $c = c' = 1$
(consider a suitable finite linear combination of the isogenies obtained for each prime $\ell$).
Thus $\phi$ is a $K$-isomorphism mapping $R$ to $R'$.
\end{proof}


\section{Positive results for the constant of the support problem}

In this section, we prove that Question~\ref{cimin}
has an affirmative answer provided that the abelian variety is of the following type:
it is the product of powers of simple abelian varieties,
which are in pairs non-isogenous and whose endomorphism rings are maximal orders in division algebras.

We start with the torsion-free case.

\begin{theorem}\label{supp-tf-max}
Let $A_1, \ldots, A_n$ be abelian varieties defined over a number field $K$
and let $A := A_1 \times \ldots \times A_n$.
Suppose that all $A_i$ are $K$-simple and that $\Hom_K(A_i,A_j)=\{0\}$ whenever $i \neq j$.
Assume that every $\End_K(A_i)$ is a maximal order.
Let $\calM$ be an $\End_K(A)$-submodule of $A(K)$ and let $Q \in A(K)$.
If $Q$ and $\calM$ satisfy Condition (SPM) and $\calM$ is torsion-free,
then $Q \in \calM$.
\end{theorem}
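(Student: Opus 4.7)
The plan is to decompose the problem via the product structure, pass to a sufficiently large extension $E/K$ so that each $A_i(E)$ contains a free $R_i$-module of infinite rank, construct a finitely generated free submodule $\calF$ of $A(E)$ sandwiched between $\calM$ and $\calM+\End_K(A)Q$, and then invoke Theorem~\ref{supp-free} over $E$. The hypothesis $\Hom_K(A_i,A_j)=0$ for $i\ne j$ gives $R:=\End_K(A)=\prod_i R_i$ with each $R_i=\End_K(A_i)$ a maximal order in the $\Q$-division algebra $R_i\otimes\Q$. Correspondingly $A(K)=\bigoplus_i A_i(K)$, and via the idempotents $e_i\in R$ we decompose $\calM=\bigoplus_i\calM_i$ with $\calM_i\subseteq A_i(K)$, and $Q=(Q_1,\dots,Q_n)$. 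By Mordell--Weil, $\calM$ is finitely generated.

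Fix a finite Galois extension $F/K$ over which $\End_{\bar K}(A)$ is defined and which contains $A(K)\tors$. Proposition~\ref{arno} yields $E\subseteq\bar K$ with $E\cap F=K$ and each $A_i(E)$ of infinite $\Z$-rank; then $\End_E(A)=R$, and because $A_i$ is $K$-simple, every infinite-order point of $A_i(E)$ generates a free rank-$1$ $R_i$-module, so by Lemma~\ref{infinite-rank-equiv} $A_i(E)$ contains a copy of $R_i^\N$. Set $\calN:=\calM+RQ=\bigoplus_i(\calM_i+R_iQ_i)$ inside $A(E)$. Granting that $\calN$ is $\Z$-torsion-free, Theorem~\ref{free-module} applies factor-wise to $\calM_i\subseteq\calM_i+R_iQ_i$, producing finitely generated free $R_i$-submodules $\calF_i\subseteq A_i(E)$ with $\calF_i\cap(\calM_i+R_iQ_i)=\calM_i$. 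Setting $\calF:=\bigoplus_i\calF_i$, we obtain a finitely generated free $R$-submodule of $A(E)$ containing $\calM$ with $\calF\cap\calN=\calM$. Being finitely generated, $\calF$ lies in $A(E')$ for some finite subextension $E'/K$ of $E$; since $E'\cap F=K$, we have $\End_{E'}(A)=R$, so $\calF$ is a free $\End_{E'}(A)$-submodule of $A(E')$. Condition~(SPM) for $(Q,\calM)$ transfers from $K$ to $E'$, and via the inclusion $\calM\subseteq\calF$ (so $\exp(\calM\bmod\frakP)\mid\exp(\calF\bmod\frakP)$) to $(Q,\calF)$. Theorem~\ref{supp-free} then gives $Q\in\calF$, and combined with the trivial $Q\in\calN$ we conclude $Q\in\calF\cap\calN=\calM$.

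The main obstacle is the $\Z$-torsion-freeness of $\calN$: if $Q$ has a nonzero torsion component, then $RQ$ contains torsion and Theorem~\ref{free-module} does not directly apply. Writing $Q=Q^f+Q^t$ with $Q^t\in A(K)\tors$, the conclusion $Q\in\calM$ already forces $Q^t=0$ since $\calM$ is torsion-free, so one needs an auxiliary step showing that (SPM) together with torsion-freeness of $\calM$ forces $Q^t=0$. The idea is that for almost all $\frakp$ one has $\ord(Q\bmod\frakp)=\lcm(\ord(Q^f\bmod\frakp),\ord(Q^t))$, hence $\ord(Q^t)$ must divide $\exp(\calM\bmod\frakp)$ on a density-$1$ set of primes; a Chebotarev-style argument applied to the finitely many generators of $\calM$ then forces $\ord(Q^t)=1$. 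Once $Q^t=0$, the module $\calN$ is $\Z$-torsion-free and the main argument above carries through, completing the proof.
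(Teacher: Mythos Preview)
Your overall architecture matches the paper's proof exactly: decompose via $R=\prod_i R_i$, pass to $E$ via Proposition~\ref{arno}, use Theorem~\ref{free-module} on each factor to trap $\calM_i$ inside a free $\calF_i$ with $\calF_i\cap\calN_i=\calM_i$, descend to a finite $L\subseteq E$, and apply Theorem~\ref{supp-free}. One detail you omit: for $\calF=\bigoplus_i\calF_i$ to be free over $R=\prod_i R_i$, the ranks of the $\calF_i$ must all be equal; the paper notes explicitly that Theorem~\ref{free-module} lets you enlarge each $\calF_i$ to any desired rank, so this is easily fixed.

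The genuine gap is in your treatment of the torsion of $\calN=\calM+RQ$. Two problems. First, the formula $\ord(Q\bmod\frakp)=\lcm(\ord(Q^f\bmod\frakp),\ord(Q^t))$ is not valid for almost all $\frakp$: if $Q^t$ has order $d$ and some multiple of $Q^f$ reduces to a point in $\langle Q^t\rangle$, there can be cancellation, and this happens on a set of primes of positive density. Second, and more seriously, even granting $Q^t=0$ (for a fixed splitting $A(K)=A(K)\tors\oplus F$), it does \emph{not} follow that $\calN$ is torsion-free: $\calM$ need not lie in $F$, so differences like $M-Q$ with $M\in\calM$ can be torsion. For a toy model take $A(K)=\Z P_1\oplus\Z P_2\oplus(\Z/2\Z)T$, $\calM=\Z(P_1+T)$ (torsion-free), $Q=P_1$ (so $Q^t=0$); then $T\in\calM+\Z Q$.

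The paper handles this differently and more directly. It sets $e=\exp(\calN_{\mathrm{tor}})$ and uses Theorem~\ref{free-module} \emph{a first time} merely to embed $\calM$ in a free module $\calF$ with basis $\{F_{ij}\}$ of independent points. Then \cite[Proposition~12]{Perucca1} gives a positive density of primes $\frakp$ for which every $\ord(F_{ij}\bmod\frakp)$ is coprime to $e$, hence $\exp(\calM\bmod\frakp)$ is coprime to $e$; by (SPM) so is $\ord(Q\bmod\frakp)$, hence $\exp(\calN\bmod\frakp)$ is coprime to $e$. But $\calN_{\mathrm{tor}}$ injects under reduction, forcing $e\mid\exp(\calN\bmod\frakp)$: contradiction unless $e=1$. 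This argument neither decomposes $Q$ nor uses any $\lcm$ identity, and it directly gives $\calN$ torsion-free; you should replace your sketch by this.
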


\begin{proof}
Let $R := \End_K(A)$.
The assumptions on $A$ imply that $R \simeq \End_K(A_1) \times \ldots \times \End_K(A_n)$.
Define $\calM_i$ as the projection of $\calM$ onto the factor $A_i(K)$.
So $\calM_i$ is an $\End_K(A_i)$-module and we have
$$
	\calM = \calM_1 \times \ldots \times \calM_n
.$$
Let $\calN := \calM + R Q$.
Analogously, we can write $\calN = \calN_1 \times \ldots \times \calN_n$.

Since every $\End_{\bar{K}}(A_i)$ is a finitely generated group,
there exists a finite extension $F$ of $K$ such that $\End_{\bar{K}}(A_i) = \End_F(A_i)$
for all $i$.
We apply Proposition~\ref{arno} to find a field $E \subseteq \bar{K}$
such that $E \cap F = K$ and such that every $A_i(E)$ has infinite rank as a $\Z$-module.
Clearly, $\End_E(A_i) = \End_K(A_i)$ for all $i$.
Recall that every non-zero element of $\End_K(A_i)$ divides a non-zero integer. Then, by applying Lemma~\ref{infinite-rank-equiv}, it easily follows that $A_i(E)$ contains a free $\End_K(A_i)$-module of infinite rank.

The modules $\calM$ and $\calN$ are finitely generated
since $A(K)$ is finitely generated.
By assumption $\calM$ is torsion-free;
we now prove that $\calN$ is torsion-free.
Let $e$ be the exponent of $\calN_\mathrm{tor}$ and suppose $e > 1$.
For all $i = 1, \ldots, n$,
Theorem~\ref{free-module} (with $G:= A_i(E)$ and $R: = \End_K(A_i)$)
shows that $\calM_i$ is contained
in a finitely generated free $\End_K(A_i)$-module $\calF_i \subseteq A_i(E)$.
For every $i$, let 
$\{F_{i 1}, \dots, F_{i r_i}\}$ be a basis for $\calF_i$.
Let $L \subseteq E$ be a finite extension of $K$ where all points $F_{i j}$ are defined.
Since the points $\{F_{i j}\}$ are independent,
by \cite[Proposition 12]{Perucca1}
there exists a positive density of primes $\frakp$ of $L$
such that for every $i$ and $j$ the order of $(F_{i j} \bmod \frakp)$ is coprime to $e$.
Hence, the exponent of $(\calM \bmod \frakp)$ is coprime to $e$.
After removing finitely many primes $\frakp$ we may assume
that $\ord(Q \bmod \frakp) \mid \exp(\calM \bmod \frakp)$
and also that $\exp(\calN_\mathrm{tor} \bmod \frakp) = e$.
It follows that the exponent of $(\calN \bmod \frakp)$ is coprime to $e$.
We have a contradiction:
the exponent of $(\calN \bmod \frakp)$ is a multiple of $e$,
but it is also coprime to $e$.

We can apply Theorem~\ref{free-module}
on $\calM_i \subseteq \calN_i$, for every $i=1,\ldots, n$.
We find that $\calM_i$ is contained in a finitely generated free $\End_K(A_i)$-module $\calF_i \subseteq A_i(E)$
such that $\calF_i \cap \calN_i = \calM_i$.
These free modules $\calF_i$ can be chosen of arbitrarily large rank,
so choose them such that their ranks are all equal to some $r>0$.
Then $\calF := \calF_1 \times \dots \times \calF_n$ is a free $R$-module of rank $r$
such that $\calF \cap \calN = \calM$.
Let $L\subseteq E$ be a finite extension of $K$ such that all points of $\calF$
are defined over $L$.
Since $Q$ and $\calF$ satisfy Condition~(SPM)
and $\calF$ is free over $\End_L(A) = R$, Theorem~\ref{supp-free} implies that $Q \in \calF$.
Thus $Q\in \calF \cap \calN = \calM$.
\end{proof}

\begin{cor}\label{supp-tf-points-max}
Let $A$ be an abelian variety defined over a number field $K$.
Suppose that $A=\prod_{i=1}^n A_i^{e_i}$ is the product of powers of $K$-simple abelian varieties,
which are in pairs non $K$-isogenous.
Suppose that $\End_K(A_i)$ is a maximal order for every $i=1,\dots,n$.
Let $P$ and $Q$ be points in $A(K)$ satisfying Condition (SP).
Suppose that the $\End_K(A)$-module generated by $P$ is torsion-free.
Then there exists $\phi$ in $\End_K(A)$ such that $\phi(P) = Q$.
\end{cor}

\begin{proof}
Let $\bar{A} = A_1 \times \ldots \times A_n$ and $\bar{\calM} = \Hom_K(A,\bar{A})\cdot P$.
The assumption on $P$ implies that $\bar{\calM}$ is a torsion-free $\End_K(\bar{A})$-module.
Notice that the identity of $\End_K (A)$ can be written as $\beta_1 \alpha_1 + \dots + \beta_m \alpha_m$
for some $m \in \N$, where $\alpha_i\in \Hom_K(A,\bar{A})$ and $\beta_i\in \Hom_K(\bar{A},A)$.

Let $\sigma$ be any element of $\Hom_K(A,\bar{A})$ and let $\bar{Q} := \sigma Q$.
If $\frakp$ is a prime of $K$, we have $\ord(\bar{Q} \bmod \frakp) \mid \ord(Q \bmod \frakp)$
and $\ord(\beta_i \alpha_i P \bmod \frakp)\mid \ord(\alpha_i P \bmod \frakp)$ for every $i=1,\ldots,m$.
We deduce that $\ord(P \bmod \frakp)\mid \exp(\bar{\calM} \bmod \frakp)$
and then that $\bar{Q}$ and $\bar{\calM}$ satisfy Condition (SPM).
By applying Theorem~\ref{supp-tf-max}, we get that $\bar{Q}\in \bar{\calM}$.

Since $\sigma$ was choosen freely,
for every $i=1,\ldots,m$ there exists $\psi_i\in \Hom(A, \bar{A})$ such that $\alpha_iQ=\psi_iP$.
Thus $Q=\sum_i \beta_i\alpha_iQ=(\sum_i \beta_i\psi_i)(P)$.
\end{proof}

We now turn our attention from torsion-free $\End_K(A)$-modules
to the general case.
Consider the following property:

\begin{define}
Let $R$ be a ring and let $\calM$ be an $R$-module
with a finite number of elements.
We call $\calM$ \emph{semi-cyclic} if the following property is satisfied:
for any two elements $T_1$ and $T_2$ in $\calM$ with $\ord(T_1) \mid \ord(T_2)$,
we must have that $T_1 = \phi T_2$ for some $\phi \in R$.
\end{define}

This notion of semi-cyclic is ``in between'' the notions
of cyclic group and cyclic module.
Indeed, a cyclic group is obviously a semi-cyclic $R$-module.
On the other hand, a semi-cyclic $R$-module is generated (as $R$-module) by any element of largest order.

Let $A$ be an abelian variety defined over a number field $K$.
Whenever the torsion part of $A(K)$ is not a semi-cyclic $\End_K(A)$-module,
the constant of the support problem is greater than $1$ (take $P$ and $Q$ independent torsion points of the same order).

\begin{cor}
Let $A$ be as in Corollary~\ref{supp-tf-points-max}.
Let $c \in \N$ be such that $c \cdot A(K)_\mathrm{tor}$ is a semi-cyclic $\End_K(A)$-module.
Let $P$ and $Q$ be points in $A(K)$ satisfying Condition~(SP).
Then there exists $\phi$ in $\End_K(A)$ such that $Q = \phi(P) + T$ for some $T \in A(K)[c]$.
\end{cor}
\begin{proof}
Let $\calM$ be the $\End_K(A)$-module generated by $P$
and let $e$ be the exponent of the torsion part of $\calM$.
By applying Corollary~\ref{supp-tf-points-max} to $e P$ and $e Q$,
we find $\phi(e P) = e Q$ for some $\phi$ in $\End_K(A)$.

Let $T := Q - \phi P$, then $\ord(T) \mid e$.
Let $T_e$ be a torsion point in $\calM$ of order $e$ and write $T_e = \tau P$.
The fact that $c A(K)_\mathrm{tor}$ is semi-cyclic
implies that $c T = \psi c T_e$ for some $\psi \in \End_K(A)$.
Now we can write $Q = (\phi + \psi \tau) P + (T - \psi T_e)$ with $c(T - \psi T_e) = 0$.
\end{proof}

This corollary has two important special cases. Firstly, 
if $A(K)_\mathrm{tor}$ is semi-cyclic then we can take $c = 1$
and we find that $Q = \phi(P)$ for some $\phi \in \End_K(A)$.
Secondly, since the zero module is semi-cyclic,
we can always take $c$ to be the exponent of $A(K)_\mathrm{tor}$.
Then we have $Q = \phi(P) + T$ for some $T \in A(K)_\mathrm{tor}$ and $\phi \in \End_K(A)$.

The question whether or not (SP) implies $Q = \phi(P) + T$ for some torsion point $T$ in $A(K)$
has been investigated by Larsen and Schoof in \cite{Larsenwhitehead} and \cite{LarsenSchoof}.
They showed in \cite{LarsenSchoof} that this is not true in general.
However in \cite[Proposition 4.3 and Theorem 5.2]{Larsenwhitehead}
Larsen proved that the above property holds for at least one variety in every $K$-isogeny class (whenever all Tate modules of $A$ are integrally semi-simple, \cite[Definition 4.1]{Larsenwhitehead}).
This also follows from our results:
by the Poincar\'e Reducibility Theorem and by Proposition~\ref{maxorder-isogenyclass},
in every $K$-isogeny class there is at least one variety satisfying the hypothesis of Corollary~\ref{supp-tf-points-max}.

\section{Refined counterexamples to the support problem}\label{examples}

\subsection{First counterexample}\label{exampleJ}

We construct a counterexample to Question~\ref{cimin}.
The abelian variety in this counterexample
is the square of an absolutely simple abelian variety whose endomorphism ring is not a maximal order.

Let $\zeta_7 $ be a primitive seventh root of unity and consider $\tau := \zeta_7 + \zeta_7\inv$.
The ring $R := \Z[2 \tau, 2 \tau^2]$ is a non-maximal order in $\Q(\tau)$.
Let $\frakm := (2, 2\tau, 2\tau^2)$, a maximal ideal in $R$ with residue field $\FF{2}$.

In the Appendix, we constructed an abelian variety $A$ defined over a number field $K$
satisfying the following properties:
it is absolutely simple, it has dimension $6$, $\End_{\bar{K}}(A) = R$
and $A[2] \simeq (R/2R)^2 \times (R/\frakm)^6$ as $R$-modules.

Enlarge $K$ if necessary such that the $2$-torsion points on $A$ are $K$-rational,
such that all endomorphisms of $A$ are defined over $K$
and such that $A(K)$ contains a point of infinite order.
Let $L_1, \ldots, L_8 \in A[2]$ be such that
$$
	A[2] = (R/2R) L_1 \oplus (R/2R) L_2 \oplus (R/\frakm) L_3 \oplus \dots \oplus (R/\frakm) L_8
.$$
In the ring $R/2R$, all elements apart from $0$ and $1$ have $\frakm$ as annihilator.
This implies the following crucial fact:

\begin{obs}\label{obs-ann}
Every point of $A[2]$ has as annihilator inside $\End_K(A)$
either $(1)$, $(2)$ or $\frakm = (2, 2 \tau, 2 \tau^2)$.
\end{obs}

A result by Bogomolov (\cite[Corollaire 1]{Bogomolov}) tells us
that the image of the $2$-adic representation of $A$
contains an open subset of the homotheties of the Tate module $\mathrm{T}_2 A$.
In particular, there exists an integer $t\geq 2$
and an element of the Galois group which fixes every point in $A[2^t]$ and does not fix any point of $A$ order $2^{t+1}$.
Then, after extending $K$, we may assume that all the $2^t$-torsion points of $A$ are $K$-rational
but no point of order $2^{t+1}$ is $K$-rational.
For $i \in \{1,\dots,8\}$, choose $T_i$ in $A(K)$ such that $2^{t-1} T_i = L_i$.
In particular, $T_i$ has order $2^t$.

Let $S$ be a point of infinite order on $A(K)$.
Let $G := A \times A$ and consider the following points in $G(K)$:
$$
	P = (2 S + T_1, 2 \tau S + T_2)
	\quad\text{and}\quad
	Q = (2 \tau^2 S + T_3, 0)
.$$

We claim that the points $P$ and $Q$ satisfy Condition (SP)
for every prime $\frakp$ of good reduction for $A$, not over $2$.
By \cite[Theorem~C.1.4]{hindry-silverman},
the reduction modulo $\frakp$ gives an isomorphism from $A[2^t]$ to $(A \bmod \frakp)[2^t]$.
If $n$ is the order of $(P \bmod \frakp)$, we have 
\begin{equation}\label{Pmodp}
	2 n S + n T_1 \equiv 0 \pmod{\frakp}
	\quad \text{and} \quad
	2 \tau n S + n T_2 \equiv 0 \pmod{\frakp}
.\end{equation}
It follows that $(2 \tau n T_1\bmod\frakp) = (2 n T_2 \bmod\frakp)$.
Then we have $2 \tau n T_1 = 2 n T_2$.
This is only possible if $2 \tau n T_1 = 2 n T_2 = 0$.
Since $\Ann(L_1) = (2)$, we have $\Ann(T_1) = (2^t)$;
hence $n$ is a multiple of $2^t$.

From \eqref{Pmodp} we deduce that $(n S \bmod{\frakp})$ equals $(U \bmod \frakp)$ for a point $U$ in $A[2]$.
Since $n T_3 = 0$, to prove that $(n Q \bmod{\frakp}) = 0$, it suffices to show that $2 \tau^2 U = 0$.
By \eqref{Pmodp}, we know that $(2 \tau n S \bmod{\frakp}) = 0$.
Therefore $2 \tau U = 0$.
Because of Observation~\ref{obs-ann}, this implies $2 \tau^2 U = 0$.

Suppose that $c$ is an integer such that $\phi(P) = c Q$ for some $\phi$ in $\End_K(A)$.
We now prove that $c$ must be a multiple of $2^{t+1}$.
Because $A(K)$ has no torsion point of order $2^{t+1}$,
this will give a counterexample to Question~\ref{cimin}.

Since $\phi(P) = c Q$,
there exist $\phi_1, \phi_2 \in \End_K(A)$ such that
\begin{equation}
	\phi_1(2 S + T_1) + \phi_2(2 \tau S + T_2) = 2 \tau^2 c S + c T_3
.\end{equation}
After rearranging the terms:
\begin{equation}\label{counter-cQfP}
	(2 \tau^2 c - 2 \phi_1 - 2 \tau \phi_2) S = \phi_1 T_1 + \phi_2 T_2 - c T_3
.\end{equation}
Since $S$ has infinite order and the points $T_1$, $T_2$, $T_3$ are independent over $R/2^t R$,
\eqref{counter-cQfP} implies
\begin{equation}\label{neweqn}
	2 \tau^2 c - 2 \phi_1 - 2 \tau \phi_2=0;\quad
	\phi_1 T_1 = \phi_2 T_2 = c T_3 = 0
.\end{equation}

Since $\Ann(T_1) = \Ann(T_2) = (2^t)$ and $T_3$ has order $2^t$, we can divide $\phi_1$, $\phi_2$ and $c$ by $2^t$.
So there exist $\phi_1'$ and $\phi_2'$ in $\End_K(A)$
and an integer $c'$ such that $\phi_1 = 2^t \phi_1'$, $\phi_2 = 2^t \phi_2'$ and $c = 2^t c'$.
Then \eqref{neweqn} implies
$$
	2 \tau^2 c' - 2 \phi_1' - 2 \tau \phi_2' = 0
.$$

An odd multiple of $2 \tau^2$ is not contained in the ideal $(2, 2\tau)$.
Therefore, $c'$ is even and $c$ is divisible by $2^{t+1}$.

\subsection{Second counterexample}\label{exampleA}

We construct a different counterexample for Question~\ref{cimin}.
Only after a finite extension of the base field,
the abelian variety of this counterexample is isomorphic
to the power of an elliptic curve whose endomorphism ring is a maximal order.
The two given points lie on a proper abelian subvariety
and one point is the image of the other by an isomorphism of the subvariety.

Consider the following elliptic curves over $\mathbb Q$:

\begin{align*}
	A: y^2 & =x^3+40 \\
	B: y^2 & =x^3+5
\end{align*}

Both $A$ and $B$ have complex multiplication over the field $\Q(\zeta_3)$,
where $\zeta_3$ corresponds to the map $x\mapsto \zeta_3x$; $y \mapsto y$.
This means that $\End_{\bar{\Q}}(A) \simeq \End_{\bar{\Q}}(B) \simeq \Z[\zeta_3]$,
the maximal order in $\Q(\zeta_3)$.
The two curves are isomorphic over $\Q(\sqrt{2})$
(from $B$ to $A$, consider $\theta$: $x\mapsto 2x$; $y \mapsto 2\sqrt{2}y$).
However, the two curves are not isogenous over $\Q$.
Suppose there is an isogeny $\alpha \in \Hom_\Q(A,B)$.
Then $\theta \circ \alpha$ is an endomorphism of $A$ defined over $\Q(\sqrt{2})$.
Since $\End_{\Q(\sqrt{2})}(A) = \End_\Q(A)$ and $\alpha$ is defined over $\Q$,
it would follow that $\theta$ is also defined over $\Q$.

The map $\theta$ induces an isomorphism of Galois modules from $B[2]$ to $A[2]$.
Thus the group
$$
	H := \set{(\theta T, T) \in (A(\bar{K}), B(\bar{K}))}{T \in B[2]}
$$ is $\gal(\bar{\Q}/\Q)$-stable.
It follows that the abelian variety
$$G=((A{\times} B)\,/H)\times B$$
is defined over $\mathbb Q$.
We claim that $G[2](\mathbb Q)$ is zero.
Since $B$ has no torsion over $\Q$, it suffices to show that $(A\times B)/ H$ has no $2$-torsion over $\Q$.
Over $\Q(\sqrt{2})$, we have
\begin{multline*}
	(A\times B)/H = (A\times B)/\{(\theta T, T)\} \simeq (B\times B)/\{(T,T)\} \\
		\simeq (B\times B)/\{(T,0)\} \simeq B/B[2] \times B \simeq B \times B
.\end{multline*}
Since $B[2](\Q(\sqrt{2})) = \{0\}$, it follows that $(A\times B)/H$ has no $2$-torsion over $\Q$.

The point $R=(-1,2)$ in $B(\mathbb Q)$ has infinite order. Define the following points in $G(\mathbb Q)$:
$$P=([(0,R)] , 0)\;;\quad Q=([(0,0)] , R)\,.$$
The two points belong to the abelian subvariety $(\{0\}{\times} B)/H \times B \simeq B \times B$.
The isomorphism which switches the two factors maps $P$ to $Q$.
In particular, the points $P$ and $Q$ satisfy Condition~(SP).

We now prove that the above points provide a counterexample to Question~\ref{cimin}.
Since $G[2](\mathbb Q)$ is zero, it suffices to show that if $\phi(P)=cQ$ for some $\phi$ in $\End_{\Q}(G)$
and $c\in \mathbb Z$, then $c$ must be even.

Let $\Phi$ be the composition
$$A\times B\stackrel{\pi}{\longrightarrow} (A\times B)/H \stackrel{\iota}{\hookrightarrow} G\stackrel{\phi}{\longrightarrow}G\stackrel{\pi_B}{\longrightarrow} B$$
where $\pi$ is the quotient map, $\iota$ is the inclusion and $\pi_B$ is the projection of $G$ onto its direct factor $B$.

Having $\Phi: A \times B \to B$ is equivalent to having
$\Phi_A$ in $\Hom_{\mathbb Q}(A,B)$ and $\Phi_B$ in $\End_\Q(B)$.
We know that $\Phi_A$ is zero since $A$ and $B$ are not $\mathbb Q$-isogenous.
Since $\phi(P)=cQ$, we deduce that $\Phi_B(R) = c R$;
hence $\Phi_B$ is the multiplication by $c$.
Let $(\theta T, T)$ be a non-zero element of $H$. Notice that the order of $\theta T$ equals the order of $T$.
If $c$ is odd, we have a contradiction:
$$0=\Phi_A(\theta T)=\Phi(\theta T,0)=\Phi(0, -T)=\Phi_B(-T)=-cT\neq 0.$$

\subsection{The support problem for tori}\label{exampleAtori}

Let $G$ be a torus defined over a number field $K$ and let $P$ and $Q$
be points in $G(K)$ satisfying condition (SP). The support problem asks whether there exists a $K$-endomorphism of $G$ mapping $P$ to $Q$.

If $G$ is one-dimensional then $\phi(P)=Q$ for some $\phi$ in $\End_K(G)$, as follows from a result by Khare (\cite[Proposition 3]{Kharegalois}).
The second author proved in \cite[Proposition 12]{Perucca2} that if $G$ is split then $\phi(P)=Q$ for some $\phi$ in $\End_K (G)$. Furthermore, she proved that in general $\phi(P)=dQ$ for some $\phi$ in $\End_K G$, where $d$ is the degree of the smallest Galois extension of $K$ splitting the torus (\cite[Lemma 2 and Proposition 12]{Perucca2}). We now answer in the negative the question of the support problem for tori.

Let $T$ be any torus satisfying the following property: the intersection $T_a\cap T_d$ of the maximal anisotropic subtorus with the maximal split subtorus is non-trivial (this intersection is always finite, see \cite[Chapter III, Section 8.15, Proposition]{Borel}). Let $R$ be a point in $T_d(K)$ which is independent: this amounts to choosing some multiplicatively independent elements in $K^*$. Then a counterexample is given by:
$$G=T\times T_d\quad;\quad P=(R,0)\quad;\quad Q=(0,R)\,.$$
It is clear that the points $P$ and $Q$ satisfy Condition (SP).
By \cite[Main Theorem]{Perucca2}, we have $\phi(P)=cQ$ for some minimal positive integer $c$ and for some $\phi$ in $\End_K G$.
Let $\Phi$ be the composition
$$T\stackrel{\iota}{\hookrightarrow} G\stackrel{\phi}{\longrightarrow}G\stackrel{\pi}{\longrightarrow} T_d$$
where $\iota$ is the inclusion and $\pi$ is the projection of $G$ onto $T_d$.
Since $\Phi(R)=cR$ and $R$ is independent in $T_d$, the restriction of $\Phi$ to $T_d$ is the multiplication by $c$. Because $\Hom_K(T_a, T_d)$ is zero \cite[Chapter III, Section 8.15, Proposition]{Borel}, the restriction of $\Phi$ to $T_a$ is zero. We deduce that the points in $(T_a\cap T_d)(\bar{K})$ are killed by the multiplication by $c$. So $c$ must be a multiple of the exponent of the group $(T_a\cap T_d)(\bar{K})$ and in particular it is not $1$.

Since $c$ divides the degree of the smallest Galois extension
of $K$ splitting the torus (\cite[Lemma 2 and Proposition 12]{Perucca2}), we also provided an alternative proof of the following:
for every torus $T$ defined over a number field $K$,
the exponent of $(T_a\cap T_d)(\bar{K})$ divides the degree of the smallest Galois extension of $K$ where $T$ splits.

\section{Appendix}

In this appendix, we construct an abelian variety for the counterexample in section \ref{exampleJ}.

Let $\zeta_7 $ be a primitive seventh root of unity and consider $\tau := \zeta_7 + \zeta_7\inv$.
The minimal polynomial of $\tau$ is  $x^3 + x^2 - 2 x - 1$.
The number field $\Q(\tau)$ is totally real and Galois with ring of integers $\Z[\tau]$.
The ring $R := \Z[2 \tau, 2 \tau^2]$ is a non-maximal order in $\Q(\tau)$.
Let $\frakm := (2, 2\tau, 2\tau^2)$ be a maximal ideal in $R$ with residue field $\FF{2}$.

\begin{theorem}\label{th-appendix}
There exists a number field $K$ and an abelian variety $A$ defined over $K$
which is absolutely simple of dimension $6$,
with $\End_{\bar{K}}(A) = R$
and such that $A[2] \simeq (R/2R)^2 \times (R/\frakm)^6$ as $R$-modules.
\end{theorem}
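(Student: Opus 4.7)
The plan is to realize $A$ over $\C$ as an isogenous variant of a generic Hilbert--Blumenthal abelian variety with real multiplication by the maximal order $\calO_K := \Z[\tau]$, with the isogeny chosen so as to collapse the endomorphism ring to the sub-order $R$ and simultaneously produce the prescribed $R$-module structure on the $2$-torsion. Descent to a number field will then follow by standard moduli-theoretic arguments.

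For the first step, realize $V := \R^{12}$ as $(K \otimes_\Q \R)^4 = (\R^3)^4$ using the three real embeddings of $K$, so that $L_B := \calO_K^4 \subset V$ is a full lattice with diagonal $\calO_K$-action. Choose a complex structure $J = (J_1, J_2, J_3) \in M_4(\R)^3$ commuting with the $\calO_K$-action and admitting a compatible alternating Riemann form; for a Zariski-generic choice in the Hilbert--Blumenthal moduli (which has complex dimension $9$), the resulting polarized abelian variety $B := V/L_B$ satisfies $\End_\C(B) = \calO_K$ exactly, the loci with strictly larger endomorphism algebra being proper analytic subvarieties.

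Now define the $R$-sublattice
\[
L_A := \calO_K e_1 \oplus \calO_K e_2 \oplus R e_3 \oplus R e_4 \;\subsetneq\; L_B,
\]
of index $[\calO_K : R]^2 = 16$, and set $A := V/L_A$. Since $R \subseteq \calO_K$ commutes with $J$, the $R$-action descends to endomorphisms of $A$, and the natural surjection $A \to B$ is an isogeny with kernel $L_B/L_A \simeq (\Z/2\Z)^4$, so $A$ inherits a polarization by pullback. Because $A$ and $B$ are isogenous, $\End_\C(A) \otimes \Q = K$, and
\[
\End_\C(A) = \{\alpha \in K : \alpha L_A \subseteq L_A\}.
\]
Inspecting $\alpha \cdot e_3$ forces $\alpha \in R$, and conversely every $\alpha \in R$ preserves $L_A$, so $\End_\C(A) = R$ exactly. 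For the $2$-torsion,
\[
A[2] = \tfrac{1}{2}L_A / L_A = (\calO_K/2\calO_K)^2 \oplus (R/2R)^2.
\]
A direct computation gives $\frakm = 2\calO_K$ as an ideal of $R$; hence the $8$-element module $\calO_K/2\calO_K$ is killed by $\frakm$ and is thus isomorphic as an $R$-module to $(R/\frakm)^3$. Substituting yields $A[2] \simeq (R/\frakm)^6 \oplus (R/2R)^2$, as required.

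Absolute simplicity is automatic from $\End_\C(A) \otimes \Q = K$ being a field: by Poincar\'e reducibility any splitting $A \sim \prod A_i^{n_i}$ with the $A_i$ simple would make $\End \otimes \Q$ a product of matrix algebras over division rings, and this collapses to a single factor with $n_1 = 1$ whenever the full endomorphism algebra is a field. For descent, the moduli of polarized abelian varieties of dimension $6$ with the prescribed $R$-action (and a rigidifying level structure) is a scheme of finite type over $\Q$; our construction gives a $\C$-point in a nonempty open locus whose $\bar\Q$-points produce $A$ defined over number fields, which we then enlarge to ensure all of $\End_{\bar K}(A) = R$ is $K$-rational. The main obstacle is verifying the genericity in the first step --- that $\End_\C(B) = \calO_K$ exactly, with no extra endomorphisms coming from a quaternion algebra over $K$ or from a larger commutative field. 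This reduces to a standard dimension estimate on the exceptional Shimura subvarieties embedded in the Hilbert--Blumenthal moduli; once in place, the rest is purely lattice bookkeeping.
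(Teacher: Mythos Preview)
Your construction is essentially the same lattice as the paper's, repackaged in cleaner module-theoretic language. Indeed, in the paper's explicit basis one has $\Lambda = \Z[M]\vece_1 \oplus \Z[M]\vece_4 \oplus \Z[2M,2M^2]\vece_7 \oplus \Z[2M,2M^2]\vece_{10} \simeq \calO_K^2 \oplus R^2$, which is exactly your $L_A$; your overlattice $L_B = \calO_K^4$ corresponds to adjoining $M\vece_7, M^2\vece_7, M\vece_{10}, M^2\vece_{10}$. Your computation of $\End_\C(A)$ as the stabilizer order of $L_A$ inside $K$, and of $A[2] \simeq (\calO_K/2\calO_K)^2 \oplus (R/2R)^2 \simeq (R/\frakm)^6 \oplus (R/2R)^2$ (via $\frakm = 2\calO_K$), is correct and more transparent than the paper's coordinate calculation. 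The paper's choice of algebraically independent $\alpha_1,\dots,\alpha_9$ and $\omega$ is precisely a concrete way of exhibiting a ``very general'' point in your nine-dimensional Hilbert--Siegel moduli, so the two verifications that $\End_\C(B)=\calO_K$ are really the same argument, one explicit and one by dimension count on Shimura subvarieties.

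The genuine gap is in your descent step. You write that the construction gives a $\C$-point ``in a nonempty open locus whose $\bar\Q$-points produce $A$ defined over number fields''. But the locus where $\End$ equals exactly $R$ (equivalently, where $\End(B)=\calO_K$) is \emph{not} Zariski open: it is the complement of a countable union of proper Shimura subvarieties. Over $\C$ this complement is nonempty by a Baire-type count, but over the countable field $\bar\Q$ there is no a priori reason a $\bar\Q$-point should avoid all of them. Producing a $\bar\Q$-point with no extra endomorphisms is exactly the content of Masser's specialization theorem, which is what the paper invokes: one spreads $\mathcal A$ (your $A$ over $\C$) over an affine variety $V$ with function field finitely generated over a number field, and Masser guarantees a closed point at which the specialized endomorphism ring coincides with $\End_\C(\mathcal A)=R$. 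Your moduli-theoretic framing is fine, but you must replace the ``open locus'' claim with this specialization input; once you do, the argument is complete and arguably more conceptual than the paper's.
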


The outline of the construction is the following: 
 We define a lattice $\Lambda$ in $\C^6$, by which we mean a discrete subgroup of $\C^6$ of rank $12$.
We show that the complex torus $\mathcal A := \C^6/\Lambda$ is an abelian variety
by exhibiting a positive definite hermitian form on $\C^6$
whose imaginary part takes integer values on $\Lambda\times \Lambda$. We check that $\End_{\C}(\mathcal A)$ is $R$ and that $\mathcal A[2] \simeq (R/2R)^2 \times (R/\frakm)^6$ as $R$-modules.
We conclude by applying a result on the specialization of abelian varieties. 

\begin{proof}[Proof of Theorem~\ref{th-appendix}]
Consider the following matrices, where $\mathbf 0_{3}$ denotes the zero matrix of dimension $3$ by $3$:
$$
	M = \begin{pmatrix}
		0 & 0 & 1 &  &  &  \\
		1 & 0 & 2 &  &\mathbf 0_{3}  &  \\
		0 & 1 & -1 &  &  &  \\
		 &  &  & 0 & 0 & 1 \\
		 &  \mathbf 0_{3} &  & 1 & 0 & 2 \\
		 &  &  & 0 & 1 & -1
	\end{pmatrix}
	;\quad
	X = \begin{pmatrix}
		2 & -1 & 2 &  &  &  \\
		-1 & 2 & -2 & & \mathbf 0_{3}  &  \\
		2 & -2 & 5 &  &  &  \\
		 &  &  & 2 & -1 & 2 \\
		 &  \mathbf 0_{3}&  & -1 & 2 & -2 \\
		 &  &  & 2 & -2 & 5
	\end{pmatrix}
.$$
These matrices satisfy $X M= M^T X$.
The minimal polynomial of $M$ is $x^3 + x^2 - 2 x - 1$.
This implies that $\Z[M] = \set{a_0 \mathrm{I}_6 + a_1 M + a_2 M^2}{a_0, a_1, a_2 \in \Z}$
is a commutative integral domain isomorphic to $\Z[\tau]$.
The characteristic polynomial of $X$ is $(x-1)^4 (x-7)^2$ so in particular $X$ is positive definite.

For $i = 1, \dots, 6$ we call $\vece_i$ the column vector that has only one non-zero entry,
located at the $i$-th row and of value $1$.
Notice that we have 
$$
	\vece_2 = M \vece_1;\quad
	\vece_3 = M^2 \vece_1;\quad
	\vece_5 = M \vece_4;\quad
	\vece_6 = M^2 \vece_4
.$$
Let $\alpha_1, \dots, \alpha_9$
be real numbers such that $\{1, \alpha_1, \dots, \alpha_9\}$ is a $\Q$-linearly independent set.
Let $\omega$ be a positive real number such that
$\omega^2$ is not equal to $f(\alpha_1, \dots, \alpha_9)$
for any polynomial $f \in \Q[x_1, \dots, x_9]$ of degree at most $2$. Write
$$
	\vecr = \begin{pmatrix}\alpha_1 \\ \alpha_2 \\ \alpha_3 \\ \alpha_4 \\ \alpha_5 \\ \alpha_6 \end{pmatrix};\quad
	\vecs = \begin{pmatrix}\alpha_4 \\ \alpha_5 \\ \alpha_6 \\ \alpha_7 \\ \alpha_8 \\ \alpha_9 \end{pmatrix}
$$
and define
\begin{align*}
	\vece_7 &= \vecr + (\omega \mi) \vece_1; &
	\vece_{10} &= \vecs + (\omega \mi) \vece_4; \\
	\vece_8 &= 2 M \vece_7 = 2 M \vecr + (2\omega \mi)\vece_2; &
	\vece_{11} &= 2 M \vece_{10} = 2 M \vecs + (2\omega \mi)\vece_5; \\
	\vece_9 &= 2 M^2 \vece_7 = 2 M^2 \vecr + (2\omega \mi)\vece_3; &
	\vece_{12} &= 2 M^2 \vece_{10} = 2 M^2 \vecs + (2\omega \mi)\vece_6
.\end{align*}

By the choice of the $\alpha_i$'s and of $\omega$, the vectors $\vece_1,\ldots, \vece_{12}$ are $\R$-linearly independent.
We define $\Lambda$ to be the $\Z$-span of $\{\vece_1,\ldots, \vece_{12}\}$ inside $\C^6$. 

Consider the following positive definite hermitian form on $\C^6$:
$$
	H(\vecx, \vecy) = (\mathbf{\bar{x}}^T X \vecy) \omega\inv\,.
$$
Let $E$ be the imaginary part of $H$,
which is an $\R$-bilinear alternating form on $\C^6$.
Since  $X M = M^T X$, we have $E(M \vecx, \vecy) = E(\vecx, M \vecy)$.
Using this property, one can easily check that $E(\vecx, \vecy) \in \Z$
for all $\vecx$ and $\vecy$ in $\Lambda$.
Thus the complex torus $\mathcal A := \C^6/\Lambda$ is an abelian variety of dimension $6$ (\cite[Corollary~p.~35]{mumford}).

We can write $\Lambda = \Z^6 + \Omega \Z^6$, where $\Omega = (\vece_7 | \vece_8 | \vece_9 | \vece_{10} | \vece_{11} | \vece_{12})$.
The imaginary part of $\Omega$ is 
$$\Im(\Omega) = (\vece_1 | 2\vece_2 | 2\vece_3 | \vece_{4} | 2\vece_{5} | 2\vece_{6})\omega\,.$$
The real part $\Re(\Omega)$ is a matrix whose entries are linear combinations of the $\alpha_i$'s.

The $\C$-endomorphisms of $\mathcal A$ are the $\C$-linear maps
$\sigma: \C^6 \to \C^6$ such that $\sigma(\Lambda) \subseteq \Lambda$.
Let $S$ be a $6 \times 6$ matrix over $\C$ defining an endomorphism. 
We first show that $S$ has integer coefficients, and then that it belongs to $\Z[2 M, 2 M^2]$.

Since $S$ maps $\vece_1, \dots, \vece_6$
into $\Lambda$, there exist two $6 \times 6$ matrices $A_1$ and $A_2$ with coefficients in $\Z$ such that $S = A_1 + \Omega A_2$.
Similarly, $\vece_7, \dots, \vece_{12}$ get mapped into $\Lambda$ so we have integer matrices $B_1$ and $B_2$ such that
$S \Omega = B_1 + \Omega B_2$.
By equating the two ways of writing $S \Omega$, we get
$$
	\Omega A_2 \Omega + (A_1 \Omega - \Omega B_2) - B_1 = 0 
$$
Taking the real part of the above equation yields:
\begin{multline*}
	\Re(\Omega) A_2 \Re(\Omega) + (A_1 \Re(\Omega) - \Re(\Omega) B_2) - B_1 \\
	\begin{aligned}
		= &\Im(\Omega)A_2\Im(\Omega) \\
		= &(\vece_1 | 2\vece_2 | 2\vece_3 | \vece_{4} | 2\vece_{5} | 2\vece_{6})
			A_2 (\vece_1 | 2\vece_2 | 2\vece_3 | \vece_{4} | 2\vece_{5} | 2\vece_{6})
			\omega^2
	.\end{aligned}
\end{multline*}
The assumption on $\omega^2$ then implies that $A_2 = 0$.
Hence $S$ is a matrix with integer coefficients.

Since $S$ maps $\vece_7$ into $\Lambda$, there exist $c_1, \dots, c_{12} \in \Z$ such that
$S \vece_7 = \sum_{i=1}^{12} c_i \vece_i$.
Define
\begin{align*}
	C_1 &:= c_1 \mathrm{I}_6 + c_2 M + c_3 M^2; &
	C_4 &:= c_4 \mathrm{I}_6 + c_5 M + c_6 M^2 \\
	C_7 &:= c_7 \mathrm{I}_6 + 2 c_8 M + 2 c_9 M^2; &
	C_{10} &:= c_{10} \mathrm{I}_6 + 2 c_{11} M + 2 c_{12} M^2
\end{align*}
Then we have
$$
	S \vece_7 = C_1 \vece_1 + C_4 \vece_4 + C_7 \vece_{7} + C_{10} \vece_{10}
.$$

The entries of $\Re(S \vece_7)$ and of $\Re(C_7 \vece_{7} + C_{10} \vece_{10})$ are linear combinations of $\alpha_1,\ldots, \alpha_9$.
Thus the entries of $(C_1 \vece_1 + C_4 \vece_4)$, which are integers, must be all zero.
So we have $\Re((S-C_7) \vece_7) = \Re(C_{10} \vece_{10})$.
By looking at the sixth entry, we get that a linear combination of $\alpha_1,\ldots, \alpha_6$ is equal to 
$$
	2 c_{12}\alpha_7 + (2 c_{11} - 2 c_{12})\alpha_8 + (c_{10} + 2 c_{11} + 6 c_{12})\alpha_9
.$$
By the independence of the $\alpha_i$'s, we deduce that
$c_{10} = c_{11} = c_{12} = 0$; hence $C_{10} = 0$.
So we have $S \vece_7 = C_7 \vece_7$.
Again by the independence of the $\alpha_i$'s, we deduce that $S = C_7$.
This means that $S$ belongs to $\Z[2 M, 2 M^2]$.

It can easily be checked that every element in $\Z[2 M, 2 M^2]$
defines an endomorphism of $\mathcal A$.
Since $\Z[2 M, 2 M^2] \simeq \Z[2 \tau, 2 \tau^2]$ as rings,
we conclude that $\End_{\C}(\mathcal A)=R$.

We now study the action of $\End_\C(\mathcal A)$ on $\mathcal A[2]$.
Define $T_i := \vece_i/2 + \Lambda$ for all $i = 1, \dots, 12$. Since $\mathcal A[2] = (\frac{1}{2}\Lambda)/\Lambda$, it is clear that
$$
	\mathcal A[2] \simeq \bigoplus_{i=1}^{12} (\Z/2\Z) T_i
.$$
For all $i = 1, \ldots, 6$, we have $2 M T_i = 0$
and also $2 M^2 T_i = 0$.
It follows that $(\Z/2\Z) T_i$ is an $R$-module isomorphic to $R/\frakm$.
On the other hand, we have $2 M T_7 = T_8$ and $2 M^2 T_7 = T_9$.
This implies that $\bigoplus_{i=7}^{9} (\Z/2\Z) T_i$
is an $R$-module isomorphic to $R/2R$.
Similarly for $\bigoplus_{i=10}^{12} (\Z/2\Z) T_i$.

Let $F = \Q(z_1, \dots, z_s)$ be a finitely generated subfield of $\C$
such that $\mathcal{A}$ is defined over $F$,
all $2$-torsion points of $\mathcal{A}$ are $F$-rational
and $\End_F(\mathcal A)=\End_\C(\mathcal A)$.
Call $k$ the relative algebraic closure of $\mathbb Q$ in $F$, which is a number field.
Choose an affine variety $V$ over $k$ whose function field is $F$.
We can specialize $\mathcal A$ with respect to the $\bar{k}$-points of $V$.
After replacing $V$ by an open affine subvariety,
we may assume that the specialization is injective on the finite set $\mathcal A[2]$
and that the dimension of the specialized variety is $\dim_\C \mathcal A=6$.
By \cite[Theorem, Section 1]{Masser} there exists a specialization $A$ of $\mathcal A$
which is an abelian variety over a number field $K\supseteq k$
such that $\End_{\bar{K}}(A)=\End_\C(\mathcal A)=R$.
Since $\mathcal A[2]$ is mapped injectively into $A[2]$, they are isomorphic as $R$-modules.
Finally, $A$ is absolutely simple by the Poincar\'e Reducibility Theorem because $R$ has no zero divisors.
\end{proof}

\bibliographystyle{amsplain}
\bibliography{all}

\end{document}